\documentclass[12pt]{article}
\usepackage[a4paper, total={6in, 8in}, margin={1in}]{geometry}
\usepackage[utf8]{inputenc}

\usepackage{amsmath}
\usepackage{amsthm}
\usepackage{amssymb}
\usepackage{amsfonts}
\usepackage[hidelinks]{hyperref}
\usepackage{scrextend}

\theoremstyle{plain}
\newtheorem{theorem}{Theorem}

\newtheorem{lemma}[theorem]{Lemma}

\theoremstyle{definition}

\theoremstyle{remark}
\newtheorem{remark}[theorem]{Remark}

\title{Quadratic Reciprocity via Vandermonde Matrix}
\author{Hongshen Chua}
\date{}

\begin{document}
\maketitle

\section{Introduction.}

Let $ p $ be an odd prime, and $ a $ an integer not divisible by $ p $. We define the \textit{Legendre symbol} $ \big( \frac{a}{p} \big) $ by setting $ \big( \frac{a}{p} \big) = 1 $ if $ a $ is a square modulo $ p $ and $ \big( \frac{a}{p} \big) = -1 $ otherwise. Euler gave the following simple criterion to determine the value of $ \big( \frac{a}{p} \big) $:	
\begin{align*}
	\bigg( \frac{a}{p} \bigg) \equiv a^{(p - 1)/2} \pmod{p}.
\end{align*}
The goal of this paper is to establish Gauss' famous \textit{law of quadratic reciprocity}, stated in Theorem \ref{Th: Law of quadratic reciprocity}.
\begin{theorem} [Law of quadratic reciprocity] \label{Th: Law of quadratic reciprocity}
	Let $ p $ and $ q $ be distinct odd primes. Then
	\begin{align*}
		\bigg( \frac{q}{p} \bigg) = (-1)^{(p - 1)(q - 1)/4} \bigg( \frac{p}{q} \bigg).
	\end{align*}
\end{theorem}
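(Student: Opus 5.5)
We will prove Theorem \ref{Th: Law of quadratic reciprocity} by computing one determinant in two ways: the Vandermonde determinant built from the $p$-th roots of unity, evaluated modulo $q$. Work in a finite field $F$ of characteristic $q$ that contains a primitive $p$-th root of unity $\zeta$; for instance take $F=\mathbb{F}_{q^f}$ with $f$ the order of $q$ modulo $p$. Let $V$ be the $p\times p$ Vandermonde matrix with entries $V_{ij}=\zeta^{ij}$, $0\le i,j\le p-1$, and put $D=\det V=\prod_{0\le i<j\le p-1}(\zeta^j-\zeta^i)$.

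\textbf{Step 1: compute $D^2$.} We have $D^2=\pm\prod_{i\ne j}(\zeta^j-\zeta^i)$, and the sign is $(-1)^{p(p-1)/2}=(-1)^{(p-1)/2}$. For fixed $i$, $\prod_{j\ne i}(\zeta^i-\zeta^j)$ is the derivative of $x^p-1$ at $\zeta^i$, namely $p\zeta^{-i}$. Taking the product over $i$ gives $p^p\zeta^{-p(p-1)/2}=p^p$, so
\begin{align*}
D^2=(-1)^{(p-1)/2}p^p=p^{*}\,p^{p-1},\qquad p^{*}=(-1)^{(p-1)/2}p .
\end{align*}
In particular $D\neq 0$ in $F$, and $D\in\mathbb{F}_q$ exactly when $p^{*}$ is a square mod $q$, since $p^{p-1}$ is a square. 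By Euler's criterion in $\mathbb{F}_q$, the condition $D^q=D$ is therefore equivalent to $\big(\frac{p^*}{q}\big)=1$. More precisely, $D^{q-1}=(D^2)^{(q-1)/2}=\big(\frac{p^*}{q}\big)$.

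\textbf{Step 2: compute $D^q$ via Frobenius.} Raising to the $q$-th power is a ring homomorphism, so it acts entrywise: $D^q=\det(\zeta^{qij})$. This matrix is $V$ with its columns permuted by $j\mapsto qj \bmod p$, a permutation of $\{0,\dots,p-1\}$ that fixes $0$. Hence $D^q=\operatorname{sgn}(\pi_q)D$. Comparing with Step 1 gives $\big(\frac{p^*}{q}\big)=\operatorname{sgn}(\pi_q)$.

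\textbf{Step 3: Zolotarev's lemma.} It remains to show $\operatorname{sgn}(\pi_q)=\big(\frac{q}{p}\big)$, and this is the main obstacle. The cleanest route is to note that $a\mapsto\operatorname{sgn}(\pi_a)$ is a homomorphism $\mathbb{F}_p^\times\to\{\pm1\}$. Because $\mathbb{F}_p^\times$ is cyclic of even order, it suffices to check that a generator $g$ has sign $-1$. The permutation $\pi_g$ fixes $0$ and acts on the remaining $p-1$ elements as a single $(p-1)$-cycle, whose sign is $(-1)^{p-2}=-1$. So $\operatorname{sgn}(\pi_a)$ is the unique nontrivial character, namely the Legendre symbol (equivalently, $a^{(p-1)/2}$ by Euler's criterion).

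\textbf{Step 4: conclude.} Combining the steps gives $\big(\frac{q}{p}\big)=\big(\frac{p^*}{q}\big)=\big(\frac{-1}{q}\big)^{(p-1)/2}\big(\frac{p}{q}\big)$. Euler's criterion gives $\big(\frac{-1}{q}\big)=(-1)^{(q-1)/2}$, so $\big(\frac{q}{p}\big)=(-1)^{(p-1)(q-1)/4}\big(\frac{p}{q}\big)$. Besides Step 3, the other delicate point is keeping track of the signs in Step 1.
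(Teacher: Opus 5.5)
Your proof is correct and is essentially the paper's argument: the same Vandermonde determinant $D$ of the $p$-th roots of unity with $D^2=(-1)^{(p-1)/2}p^p$, the $q$-th power map permuting columns by $j\mapsto qj \bmod p$ so that $D^{q-1}=\text{sgn}(\pi_q)$, and Zolotarev's lemma via a primitive root, followed by Euler's criterion. The differences are cosmetic: you work in $\mathbb{F}_{q^f}$ (the Delsarte--Keune setting, which the paper calls equivalent) instead of $\mathbb{Z}[\zeta]$ modulo $q$, which makes the Frobenius step an exact equality and removes the need for $\mathbb{Z}\cap q\mathbb{Z}[\zeta]=q\mathbb{Z}$ (though you should still state that $1\neq -1$ in $F$, since $q$ is odd, when you read off the equality of signs), and you obtain $D^2$ from $f'(\zeta^i)=p\zeta^{-i}$ for $f=x^p-1$ rather than from $V^TV$.
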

This result is probably the most proven theorem in mathematics, after Pythagoras' theorem. As of the time of this writing, no fewer than $ 300 $ proofs have been discovered, and they are well documented in Oswald Baumgart's \cite{Baumgart} book.

Our approach combines two methods: a matrix method by Jean Delsarte \cite{Delsarte} and a permutation method by Yegor Ivanovich Zolotarev \cite{Zolotarev}. We are aware of several other papers that have attempted similar approaches, but these papers do not explicitly draw the link between the two aforementioned methods. For a deeper discussion, we refer the reader to $ \S $\ref{Section: Similar proofs} below. 

\section{Vandermonde matrix.} \label{Section: Vandermonde matrix}

Consider the following Vandermonde matrix:
\begin{align*}
	V = V(\zeta^1, \zeta^2, \zeta^3, \ldots, \zeta^p) = 
	\begin{pmatrix}
		1 & \zeta^1 & (\zeta^1)^2 & \cdots & (\zeta^1)^{p - 1} \\
		1 & \zeta^2 & (\zeta^2)^2 & \cdots & (\zeta^2)^{p - 1} \\
		1 & \zeta^3 & (\zeta^3)^2 & \cdots & (\zeta^3)^{p - 1} \\
		\vdots & \vdots & \vdots & \ddots & \vdots \\
		1 & \zeta^p & (\zeta^p)^2 & \cdots & (\zeta^p)^{p - 1}
	\end{pmatrix},
\end{align*}
where $ \zeta^k = e^{2\pi ik/p} $, $ 1 \leq k \leq p $ are the $ p $-th roots of unity. We prove our first lemma. 

\begin{lemma} \label{Lemma: Determinant} 
	$ \det (V)^2 = (-1)^{(p - 1)/2} p^p $.
\end{lemma}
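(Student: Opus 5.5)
We compute $\det(V)^2$ through the product formula for the Vandermonde determinant. Write $\zeta_k = \zeta^k$. Then
\begin{align*}
	\det(V) = \prod_{1 \leq j < k \leq p} (\zeta_k - \zeta_j),
\end{align*}
so that
\begin{align*}
	\det(V)^2 = \prod_{j < k} (\zeta_k - \zeta_j)^2 = (-1)^{p(p-1)/2} \prod_{j \neq k} (\zeta_k - \zeta_j).
\end{align*}
The sign comes from pairing each ordered pair $(k,j)$ with $(j,k)$: there are $\binom{p}{2}$ unordered pairs, and each contributes one factor of $-1$. Since $p$ is odd, $p(p-1)/2 \equiv (p-1)/2 \pmod 2$, so this sign is exactly $(-1)^{(p-1)/2}$. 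It therefore remains to show that
\begin{align*}
	\prod_{j \neq k} (\zeta_k - \zeta_j) = p^p.
\end{align*}

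For this, fix $k$ and consider $f(x) = x^p - 1 = \prod_{j=1}^{p} (x - \zeta_j)$. Differentiating and evaluating at the root $x = \zeta_k$ gives
\begin{align*}
	\prod_{j \neq k} (\zeta_k - \zeta_j) = f'(\zeta_k) = p\,\zeta_k^{p-1} = p\,\zeta_k^{-1}.
\end{align*}
Taking the product over all $k$ yields
\begin{align*}
	\prod_{k=1}^{p} \prod_{j \neq k} (\zeta_k - \zeta_j) = p^p \prod_{k=1}^{p} \zeta_k^{-1}.
\end{align*}

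The final step is to evaluate $\prod_{k=1}^{p} \zeta_k$. There are two equivalent ways to see it:
\begin{itemize}
	\item directly, it equals $\zeta^{p(p+1)/2} = (\zeta^{p})^{(p+1)/2} = 1$, because $p$ is odd and hence $(p+1)/2$ is an integer;
	\item alternatively, it is $(-1)^p$ times the constant term $-1$ of $x^p - 1$, which again gives $1$.
\end{itemize}
Hence $\prod_{j \neq k} (\zeta_k - \zeta_j) = p^p$, and combining this with the sign computed above proves the lemma.

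There is no serious obstacle here. The only points needing care are the sign bookkeeping when converting the product over $j<k$ into the product over ordered pairs, and the use of the oddness of $p$, both in reducing $p(p-1)/2$ modulo $2$ and in showing $\prod_k \zeta_k = 1$.
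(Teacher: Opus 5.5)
Your proof is correct, but it takes a different route from the paper. The paper does not use the Vandermonde product formula for this lemma. It computes $V^TV$ entrywise via $\sum_{k} \zeta^{mk} \in \{0, p\}$ according to whether $p \mid m$. This shows that $V^TV$ is $p$ times the permutation matrix of the index involution $i \mapsto 2 - i \pmod{p}$. That involution fixes only $i = 1$ and consists of $(p-1)/2$ transpositions, which gives the sign $(-1)^{(p-1)/2}$. You get the same sign instead by counting the $\binom{p}{2}$ unordered pairs and reducing $p(p-1)/2$ modulo $2$.

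Your argument is the standard discriminant computation $\mathrm{disc}(f) = (-1)^{n(n-1)/2} \prod_k f'(\alpha_k)$, applied to $f = x^p - 1$ and finished with $\prod_k \zeta^k = 1$. So you make explicit what the paper only mentions in its Remark after the lemma.

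Each route has its own advantage:
\begin{itemize}
\item The paper's computation is self-contained linear algebra plus orthogonality of characters of $\mathbb{Z}/p\mathbb{Z}$. In effect it uses the fact that this DFT-type matrix ``squares'' to $p$ times a reversal.
\item Your approach works verbatim for any monic polynomial with distinct roots. It also ties the lemma directly to the product formula $\det(V) = \prod_{i<j}(\zeta^j - \zeta^i)$ that the paper uses in Section~3. The cost is the ordered-versus-unordered sign bookkeeping and the separate evaluation of $\prod_k \zeta^k$.
\end{itemize}
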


\begin{proof}
	One can compute this by finding $ V^TV $ directly. Indeed, the $ (i, j) $-th entry of $ V^TV $ is 
	\begin{align*}
		\sum_{k = 0}^{p - 1} \zeta^{(i + j - 2)k} = \begin{cases} p, &\text{ if } i + j \equiv 2 \pmod{p}, \\ 0, &\text{ if } i + j \not \equiv 2 \pmod{p}. \end{cases}
	\end{align*}
	Thus 
	\begin{align*}
		\det(V)^2 = \det(V^TV) = 
		\begin{vmatrix}
			p & 0 & 0 & \cdots & 0 & 0 \\
			0 & 0 & 0 & \cdots & 0 & p \\
			0 & 0 & 0 & \cdots & p & 0 \\
			\vdots & \vdots & \vdots & \ddots & \vdots & \vdots \\
			0 & p & 0 & \cdots & 0 & 0
		\end{vmatrix}
		= (-1)^{(p - 1)/2} p^p.
	\end{align*}
\end{proof}

\begin{remark}
	The formula for $ \det(V) $ given in $ \S $\ref{Section: Zolotarev's lemma} shows that we have, in fact, computed the discriminant of $ X^p - 1 $ in Lemma \ref{Lemma: Determinant}. 
\end{remark}

\section{Zolotarev's lemma.} \label{Section: Zolotarev's lemma}

On the other hand, being a Vandermonde matrix, the determinant of $ V $ has the simple form of
\begin{align*}
	\det (V) = \prod_{i < j} (\zeta^j - \zeta^i).
\end{align*}

Note that, under the $ q $-th power map $ \sigma: \zeta \mapsto \zeta^q $, the determinant $ \det (V) $ becomes
\begin{align*}
	\det (\sigma (V)) = \prod_{i < j} (\zeta^{qj} - \zeta^{qi}) = (-1)^\mu \det (V),
\end{align*}
where $ \mu $ is the number of times $ qi \pmod{p} > qj \pmod{p} $, when taken as integers between $ 0 $ and $ p - 1 $. This is exactly the number of inversions under the multiplication map $ \tau_q : x \mapsto qx $ of residue classes modulo $ p $. Therefore, we have 
\begin{align} \label{Eq: 1}
	\det (\sigma (V)) = \text{sgn} (\tau_q) \det (V), 
\end{align}
where $ \text{sgn} (\cdot) $ is the signature of the permutation. Conveniently, this permutation is related to the Legendre symbol through Zolotarev's lemma, given below. 
\begin{lemma} [Zolotarev's lemma] \label{Lemma: Zolotarev}
	Let $ p $ be an odd prime. If $ a \in (\mathbb{Z}/p\mathbb{Z})^* $, then
	\begin{align*}
		\bigg( \frac{a}{p} \bigg) = \text{sgn} (\tau_a),
	\end{align*}
	where $ \tau_a : x \mapsto ax $ is the multiplication-by-$ a $ map on $ (\mathbb{Z}/p\mathbb{Z})^* $.
\end{lemma}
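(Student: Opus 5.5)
The plan is to prove Zolotarev's lemma by showing that both sides are group homomorphisms $(\mathbb{Z}/p\mathbb{Z})^* \to \{\pm 1\}$, and then checking that they agree on a generator.

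\textbf{Step 1: $a \mapsto \text{sgn}(\tau_a)$ is a homomorphism.} Since $\tau_{ab} = \tau_a \circ \tau_b$ as permutations of $(\mathbb{Z}/p\mathbb{Z})^*$, multiplicativity of the signature gives $\text{sgn}(\tau_{ab}) = \text{sgn}(\tau_a)\,\text{sgn}(\tau_b)$.

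\textbf{Step 2: the Legendre symbol is a homomorphism.} By Euler's criterion from the introduction, $\big(\frac{ab}{p}\big) \equiv (ab)^{(p-1)/2} \equiv \big(\frac{a}{p}\big)\big(\frac{b}{p}\big) \pmod p$. Both sides are $\pm 1$ and $p > 2$, so equality holds in $\mathbb{Z}$.

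\textbf{Step 3: compare on a generator.} Let $g$ be a primitive root modulo $p$; this uses the standard fact that $(\mathbb{Z}/p\mathbb{Z})^*$ is cyclic of order $p-1$. Every element is a power of $g$, so by Steps 1 and 2 it suffices to show $\text{sgn}(\tau_g) = \big(\frac{g}{p}\big)$.

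\textbf{Step 4: compute the two values.} First, $g$ is a non-residue: if $g = h^2$, then $g^{(p-1)/2} = h^{p-1} = 1$, contradicting that $g$ has order $p-1$. Hence $\big(\frac{g}{p}\big) = -1$.

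Next, compute $\text{sgn}(\tau_g)$ from the cycle structure. On $(\mathbb{Z}/p\mathbb{Z})^*$, the permutation $\tau_g$ sends $g^k \mapsto g^{k+1}$, so it is a single cycle of length $p-1$. A cycle of length $m$ has signature $(-1)^{m-1}$. Since $p-1$ is even, $\text{sgn}(\tau_g) = (-1)^{p-2} = -1$. The two values agree, which proves the lemma.

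\textbf{Compatibility with \eqref{Eq: 1}.} In the preceding section, $\tau_q$ was viewed as a permutation of all residues $0, \ldots, p-1$, not just the units. Since $\tau_q$ fixes $0$, its signature is the same on either set. This remark is needed so the lemma applies to \eqref{Eq: 1}.

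\textbf{Main obstacle.} There is no real difficulty. The only nontrivial inputs are the existence of a primitive root and the signature of a cycle, and both are standard.

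An alternative that avoids primitive roots is to decompose $\tau_a$ into orbits. Each orbit is a coset of $\langle a \rangle$, and each has length $d = \operatorname{ord}(a)$. So $\text{sgn}(\tau_a) = (-1)^{(d-1)(p-1)/d}$.

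If $d$ is odd, this equals $1$. In that case $d \mid (p-1)/2$, so $a^{(p-1)/2} = 1$ and $\big(\frac{a}{p}\big) = 1$, matching.

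If $d$ is even, the sign is $(-1)^{(p-1)/d}$. Here $a^{(p-1)/2} = (a^{d/2})^{(p-1)/d} = (-1)^{(p-1)/d}$, using that $a^{d/2} = -1$ as the unique element of order $2$. Euler's criterion then gives agreement again.

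I would present the primitive-root version and mention this one only if cyclicity is to be avoided, although it also uses uniqueness of $-1$ as the element of order $2$.
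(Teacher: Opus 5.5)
Your main argument is correct and is essentially the paper's proof. Both compare the two multiplicative maps $(\mathbb{Z}/p\mathbb{Z})^* \to \{\pm 1\}$ at a primitive root $g$, where $\tau_g$ is a $(p-1)$-cycle of sign $-1$ and $g$ is a non-residue. You simply justify steps the paper leaves implicit: multiplicativity of the Legendre symbol via Euler's criterion, why $g$ is a non-residue, and why fixing $0$ leaves the sign unchanged. Your orbit-decomposition alternative is also correct.
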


\begin{proof}
	Let $ g $ be a primitive root modulo $ p $. The map $ \tau_g $ is a cycle of length $ p - 1 $, which is even. Hence, $ \tau_g $ is an odd permutation, and $ \text{sgn} (\tau_g) = -1 $. Moreover, since $ g $ is not a square modulo $ p $, we have $ \big( \frac{g}{p} \big) = -1 $. 
	
	For $ a = g^i $, the multiplicativity of both the Legendre symbol and the signature of a permutation implies that
	\begin{align*}
		\bigg( \frac{a}{p} \bigg) = \bigg( \frac{g^i}{p} \bigg) = \bigg( \frac{g}{p} \bigg)^i = \text{sgn} (\tau_g)^i = \text{sgn} (\tau_a).
	\end{align*}
\end{proof}

\section{Law of quadratic reciprocity.} \label{Section: Law of quadratic reciprocity}

Finally, we are ready to prove the law of quadratic reciprocity. 
\begin{proof} [Proof of Theorem \ref{Th: Law of quadratic reciprocity}]
	Observe that $ \det (V), \det (\sigma(V)) \in \mathbb{Z}[\zeta] $. Working modulo $ q\mathbb{Z}[\zeta] $ in the ring $ \mathbb{Z}[\zeta] $, the map $ \sigma $ preserves addition and multiplication. Thus, $ \det(\sigma (V)) \equiv \det(V)^q \pmod{q\mathbb{Z}[\zeta]} $. By Equation \eqref{Eq: 1} and Lemma \ref{Lemma: Zolotarev}, we can write this as 
	\begin{align*}
		\bigg( \frac{q}{p} \bigg) \det (V) \equiv \det (V)^q \pmod{q\mathbb{Z}[\zeta]}.
	\end{align*}
	Multiplying both sides by $ \det (V) $ and using Lemma \ref{Lemma: Determinant} yields 
	\begin{align*}
		\bigg( \frac{q}{p} \bigg) (-1)^{(p - 1)/2} p^p \equiv \det (V)^{q - 1} (-1)^{(p - 1)/2} p^p \pmod{q\mathbb{Z}[\zeta]}.
	\end{align*}
	This simplifies to 
	\begin{align} \label{Eq: 2}
		\bigg( \frac{q}{p} \bigg) \equiv \det (V)^{q - 1} \pmod{q\mathbb{Z}[\zeta]}.
	\end{align}
	On the other hand, since $ \det (V)^{q - 1} = (\det (V)^2)^{(q - 1)/2} $, an application of Lemma \ref{Lemma: Determinant} and Euler's criterion gives
	\begin{align*}
		\det (V)^{q - 1} \equiv (-1)^{(p - 1)(q - 1)/4} p^{p(q - 1)/2} \equiv (-1)^{(p - 1)(q - 1)/4} \bigg( \frac{p^p}{q} \bigg) \pmod{q\mathbb{Z}[\zeta]}.
	\end{align*}
	As $ p $ is odd, this simplifies to 
	\begin{align} \label{Eq: 3}
		\det (V)^{q - 1} \equiv (-1)^{(p - 1)(q - 1)/4} \bigg( \frac{p}{q} \bigg) \pmod{q\mathbb{Z}[\zeta]}.
	\end{align}
	It follows from Equations \eqref{Eq: 2} and \eqref{Eq: 3} that
	\begin{align*}
		\bigg( \frac{q}{p} \bigg) \equiv (-1)^{(p - 1)(q - 1)/4} \bigg( \frac{p}{q} \bigg) \pmod{q\mathbb{Z}[\zeta]}.
	\end{align*}
	Both sides of this congruence are in $ \mathbb{Z} $, and we know that $ \mathbb{Z} \cap q\mathbb{Z}[\zeta] = q\mathbb{Z} $ (see below for details), so the congruence also holds modulo $ q $, i.e., 
	\begin{align*}
		\bigg( \frac{q}{p} \bigg) \equiv (-1)^{(p - 1)(q - 1)/4} \bigg( \frac{p}{q} \bigg) \pmod{q}.
	\end{align*}	
	Since both sides are $ \pm 1 $ and $ q > 2 $, the equivalence is in fact an equality, and we get
	\begin{align*}
		\bigg( \frac{q}{p} \bigg) = (-1)^{(p - 1)(q - 1)/4} \bigg( \frac{p}{q} \bigg),
	\end{align*}
	as desired. 
	
	It remains to justify the fact that $ \mathbb{Z} \cap q\mathbb{Z}[\zeta] = q\mathbb{Z} $. Since $ x^{p - 1} + x^{p - 2} + \ldots + x + 1 $ is irreducible over $ \mathbb{Z} $ and $ \zeta $ is a root of this polynomial, we see that every element in $ \mathbb{Z}[\zeta] $ can be written uniquely as $ a_0 + a_1 \zeta + \ldots + a_{p - 2} \zeta^{p - 2} $, where $ a_0, a_1, \ldots, a_{p - 2} \in \mathbb{Z} $. It follows from this that $ \mathbb{Z} \cap q\mathbb{Z}[\zeta] = q\mathbb{Z} $. 	
\end{proof}

\section{Similar proofs.} \label{Section: Similar proofs}

The matrix $ V $ has been the starting point in several other proofs of the law of quadratic reciprocity, for example, by Jean Delsarte \cite{Delsarte}, Frans Keune \cite{Keune}, and Kaoru Motose \cite{Motose}. Interestingly, Motose uses a modified matrix, which makes his computation somewhat longer. Despite this difference, our method closely resembles Motose's in that it is carried out over the complex numbers. In contrast, Delsarte and Keune work within a finite field. Both approaches are ultimately equivalent. 

A key distinction in our proof is the realization that the change of sign of $ \det(V) $ under $ \sigma $ is already captured by Zolotarev's lemma (Lemma \ref{Lemma: Zolotarev}). The other papers follow the same reasoning as in the proof of Zolotarev's lemma above, but without referencing it. Nevertheless, all the aforementioned papers serve as valuable supplements to ours. 

Additionally, we have conducted a quick survey of proofs based on Zolotarev's lemma. To our knowledge, these proofs rely on a combinatorial argument, none of which involve the matrix $ V $. Thus, we present our proof as a bridge that connects both approaches. 	

\bibliographystyle{vancouver}

\end{document}